\documentclass[12pt,reqno]{article}

\usepackage[usenames]{color}
\usepackage{amssymb}
\usepackage{graphicx}
\usepackage{amscd}
\usepackage{tikz}
\usetikzlibrary{automata,positioning}

\def\Enn{{\mathbb{N}}}
\def\Zee{{\mathbb{Z}}}

\usepackage[colorlinks=true,
linkcolor=webgreen,
filecolor=webbrown,
citecolor=webgreen]{hyperref}

\definecolor{webgreen}{rgb}{0,.5,0}
\definecolor{webbrown}{rgb}{.6,0,0}

\usepackage{color}
\usepackage{fullpage}
\usepackage{float}

\usepackage{graphics,amsmath,amssymb}
\usepackage{amsthm}
\usepackage{amsfonts}
\usepackage{latexsym}
\usepackage{epsf}

\def\Zee{\mathbb{Z}}

\def\modd#1 #2{#1\ \mbox{\rm (mod}\ #2\mbox{\rm )}}

\title{A Frameless $2$-Coloring of the Plane Lattice}

\author{Craig S. Kaplan and Jeffrey Shallit\\
School of Computer Science\\
University of Waterloo\\
Waterloo, ON  N2L 3G1 \\
Canada\\
\href{mailto:csk@uwaterloo.ca}{\tt csk@uwaterloo.ca} \\
\href{mailto:shallit@uwaterloo.ca}{\tt shallit@uwaterloo.ca} \\
}

\date{}

\begin{document}

\maketitle

\theoremstyle{plain}
\newtheorem{theorem}{Theorem}
\newtheorem{corollary}[theorem]{Corollary}
\newtheorem{lemma}[theorem]{Lemma}
\newtheorem{proposition}[theorem]{Proposition}

\theoremstyle{definition}
\newtheorem{definition}[theorem]{Definition}
\newtheorem{example}[theorem]{Example}
\newtheorem{conjecture}[theorem]{Conjecture}
\newtheorem{thm}{}

\theoremstyle{remark}
\newtheorem{remark}[theorem]{Remark}

\begin{abstract}
A {\it picture frame} in two dimensions is a rectangular array of
symbols, with at least two rows and columns,
where the first and last rows are identical, and the
first and last columns are identical.  If a coloring of the plane
lattice has no picture frames, we call it {\it frameless}.  In this
note we show how to create a simple $2$-coloring of the plane lattice
that is frameless.
\end{abstract}

\section{Introduction}

% Combinatorics on words on the one hand, and Ramsey theory on the other,
% are both concerned with pattern avoidance, but specialists use different
% terminology.   A combinatorist speaks of words made up of
% symbols chosen from a finite alphabet, while 
% a Ramsey specialist speaks of finite colorings.   These are equivalent ways
% to view the same problem, if we associate each symbol with
% a color.   

A classic problem from combinatorics on words is the avoidance of
overlaps.   An {\it overlap} is a word of the form $axaxa$, where
$a$ is a single symbol and $x$ is a (possibly empty) word. The
English word {\tt alfalfa}, for example, forms an overlap with $a={\tt a}$ and $x={\tt lf}$.
The Norwegian mathematician Axel
Thue (1863--1922) proved that there exists a (one-sided) infinite word
over
a binary alphabet that contains no overlaps.   His classic
paper, in German, was published in 1912 in an obscure Norwegian
journal \cite{Thue:1912}.   Thankfully, an English translation is available
\cite{Berstel:1995}.   

Today the word that Thue produced is called the {\it Thue-Morse word},
and is often abbreviated ${\bf t}$.  Its first few terms are
$${\bf t} = t(0) t(1) t(2) \cdots = {\tt 0110100110010110 } \cdots.$$
The Thue-Morse word $\bf t$ has many equivalent definitions
\cite{Allouche&Shallit:1999}, but the following three are the
most important:
\begin{itemize}
\item $t(n)$ is the number of $1$ bits, taken modulo $2$, in the binary
representation of $n$.
\item $\bf t$ is the limit of $\mu^n(0)$, where $\mu$ is the morphism
defined by $\mu(0) = 01$ and $\mu(1) = 10$, and the exponent on 
$\mu$ denotes $n$-fold composition of morphisms.   This limit is well-defined
since $\mu(0)$ starts with $0$, implying that 
$\mu^n (0)$ is a prefix of $\mu^{n+1} (0)$ for all $n$, and is written
as $\mu^\omega(0)$.
\item There is a finite automaton of two states, which given $n$ expressed
in base $2$ as an input, computes $t(n)$.  This means that
$\bf t$ is a {\it $2$-automatic sequence} \cite{Allouche&Shallit:2003,Rowland:2015}.  
\end{itemize}

Thue's original proof was neither very difficult nor very simple.  Today,
however, however, theorem-proving software can easily
prove his result in less than
a second.   The idea is as follows:   we write a predicate in first-order
logic that asserts that $\bf t$ is overlap-free.   We then type this
predicate into {\tt Walnut}, software written by Hamoon Mousavi that
can prove or disprove any suitable predicate concerning automatic
sequences \cite{Mousavi:2016}, and read the result.

If $\bf t$ has an overlap $axaxa$, then there exist integers
$i \geq 0$ and $n = |ax| \geq 1$ such that $t(i+j) = t(i+n+j)$ for
$0 \leq j \leq n$.   So the nonexistence of overlaps is specified by
the predicate
$$ \forall i, n \ (i \geq 0 \ \wedge \ n \geq 1) \implies
(\exists j \ j \geq 0 \ \wedge \ j \leq n  \ \wedge \ t(i+j) \not= t(i+n+j) ) .$$
A Walnut variable {\tt T} encodes the Thue-Morse word. We can
translate the predicate above into a {\tt Walnut} query named {\tt tmpred}
as follows: \\
\centerline{\tt eval tmpred "A i,n (n >= 1) => (Ej j<=n \& T[i+j] != T[i+n+j])":} 
{\tt Walnut} evaluates this predicate as {\tt true}, which we can take
as proof that the Thue-Morse word is overlap-free! 
(We don't have to include $i \geq 0$ and $j \geq 0$ in our {\tt Walnut}
predicate because the domain of variables in its
predicates is assumed to be $\Enn$ by default.)

Today, the field of pattern avoidance in words is extremely broad
and dynamic, with many generalizations:  avoidance in
circular words \cite{Currie:2005}, two-dimensional
words \cite{Bean&Ehrenfeucht&McNulty:1979}, graphs \cite{Grytczuk:2007}, 
and so forth.   

Thue himself proved a generalization
of his result to ``two-sided'' infinite words.   These are maps
from $\Zee$ to a finite alphabet $\Delta$.   We can turn a ``right-infinite''
infinite word into a ``left-infinite'' word with the reversal operator,
which is denoted by an exponent of $R$.   And we can concatenate
a left-infinite word to a right-infinite word to produce a two-sided
infinite word.  Thue proved
\begin{theorem}
The two-sided infinite word ${\bf u} := {\bf t}^R {\bf t} = \cdots t(3) t(2) t(1) t(0) t(0) t(1)  t(2) t(3) \cdots$ is overlap-free.
\end{theorem}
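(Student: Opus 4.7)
The plan is to show that every finite factor of ${\bf u}$ appears as a factor of $\overline{{\bf t}}$, the word obtained from ${\bf t}$ by swapping $0$s and $1$s. Since bitwise complementation sends overlaps to overlaps, $\overline{{\bf t}}$ is overlap-free by Thue's theorem, and the desired conclusion follows. The heart of the argument is the identity
\[
 u(-2^\ell)\, u(-2^\ell+1) \cdots u(2^\ell - 1) \;=\; \overline{t(0)\, t(1) \cdots t(2^{\ell+1}-1)},
\]
valid for every \emph{odd} integer $\ell \geq 1$. Once this is in hand, any finite factor of ${\bf u}$ sits inside $u(-2^\ell) \cdots u(2^\ell - 1)$ for some sufficiently large odd $\ell$, and so occurs as a factor of $\overline{{\bf t}}$.

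To prove the identity I would split on the sign of the index $j$ with $-2^\ell \leq j \leq 2^\ell - 1$. For $0 \leq j \leq 2^\ell - 1$, the binary representations of $j$ and $j + 2^\ell$ differ by exactly one additional $1$-bit, so $u(j) = t(j) = \overline{t(j+2^\ell)}$. For $-2^\ell \leq j \leq -1$, set $m = j + 2^\ell \in [0, 2^\ell - 1]$, so that $u(j) = t(-1-j) = t(2^\ell - 1 - m)$. In $\ell$ binary digits, $2^\ell - 1 - m$ is the bitwise complement of $m$, so its digit sum equals $\ell$ minus the digit sum of $m$; reducing modulo $2$, $t(2^\ell - 1 - m) \equiv \ell + t(m) \pmod 2$. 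This equals $\overline{t(m)} \equiv 1 + t(m) \pmod 2$ precisely when $\ell$ is odd.

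With the identity in hand, the rest is immediate. Any hypothetical overlap in ${\bf u}$, being a finite factor, transports via the identity to an overlap in $\overline{{\bf t}}$, and hence, by swapping $0$s and $1$s, to an overlap in ${\bf t}$, contradicting Thue's theorem. The one delicate point---and the main obstacle---is the parity constraint that forces $\ell$ to be odd; for even $\ell$ the modular bookkeeping produces the wrong sign and the identity breaks. Fortunately, arbitrarily large odd values of $\ell$ are available, so every finite factor is still captured, and the argument goes through.
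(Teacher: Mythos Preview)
Your argument is correct, and at the structural level it coincides with the paper's: both proofs show that the central block $u(-2^\ell)\cdots u(2^\ell-1)$, for odd $\ell$, is a factor of ${\bf t}$ (equivalently, of $\overline{\bf t}$), and both hit the same parity obstruction that forces $\ell$ to be odd. The paper phrases the identity via morphisms---introducing $\nu(0)=10$, $\nu(1)=01$, proving $\nu^{2n+1}(0)=\mu^{2n+1}(1)$ by induction, and concluding that the central block equals $\mu^{2n+2}(1)$, a factor of $\mu^{2n+3}(0)$---whereas you obtain the same identity by direct digit-sum bookkeeping (the ``extra $1$-bit'' for $j\ge 0$, the $\ell$-bit complement for $j<0$). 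Your route is more elementary and self-contained, avoiding the auxiliary morphism $\nu$ and the two inductions; the paper's route makes the connection to the substitutive structure of ${\bf t}$ more visible. Since $\overline{\mu^{\ell+1}(0)}=\mu^{\ell+1}(1)$, the two identities are literally the same statement.
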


\begin{proof}
Recall that a morphism $h$ is a map from words to words that obeys
the rule $h(xy) = h(x) h(y)$ for all words $x, y$.

Let $\nu$ be the morphism defined by $\nu(0) = 10$ and $\nu(1) = 01$.
An easy induction shows that $\nu^{2n+1} (0)$ gives the last $2^{2n+1}$ bits
of ${\bf t}^R$.   Similarly, another easy induction shows that
$\nu^{2n+1} (0) = \mu^{2n+1} (1)$.   So the ``central'' $2^{2n+2}$ bits
of ${\bf t}^R {\bf t}$, that is, the word
$t(2^{2n+1} - 1) t(2^{2n+1}-2) \cdots t(1) t(0) t(0) t(1) \cdots
t(2^{2n+1} - 2) t(2^{2n+1} - 1)$, equals $\nu^{2n+1} (0) \mu^{2n+1} (0)$.
But $$\nu^{2n+1} (0) \mu^{2n+1} (0) = 
\mu^{2n+1} (1) \mu^{2n+1} (0) = \mu^{2n+2} (1),$$ which since
$\mu^{2n+3} (0) = \mu^{2n+2} (01)$, appears in $\bf t$ and hence
is overlap-free.
\end{proof}

Thue's two-sided infinite word $\bf u$
can alternatively be defined simply by extending the
domain of $t$ as follows:  if $n$ is negative, then set
\begin{equation}
t(n) := t(-1-n).  \label{ext}
\end{equation}
Then ${\bf u} = \cdots t(-2) t(-1) t(0) t(1) t(2) \cdots$.

In this note we give a surprising example of pattern avoidance
in the plane lattice $\Zee \times \Zee$.   Although our result can be
stated in the language of two-dimensional words, here
it seems somehow
more natural to use the language of colorings instead.   A
{\it coloring} of the plane lattice is a map
$f: \Zee \times \Zee$ into a finite alphabet $\Delta$, where
the elements of $\Delta$ are called {\it colors}.   If the number
of elements of $\Delta$ is $k$, we call $f$ a {\it $k$-coloring}.

The particular kind of pattern we avoid is called a 
``picture frame'', or just ``frame'' for short.  A {\it picture frame}
is a finite rectangular array of points in the plane lattice, with
at least two rows and columns, such that the first row equals the 
last row, and the first column equals the last column.   The
following diagram illustrates part of a plane coloring having
a picture frame in it.  In this case the alphabet $\Delta$ consists
of English letters, and the highlighted frame is bordered by the
horizontal word {\tt ERMINE} and the vertical word {\tt EWE}.

\begin{figure}[H]
\begin{center}
\includegraphics[width=4in]{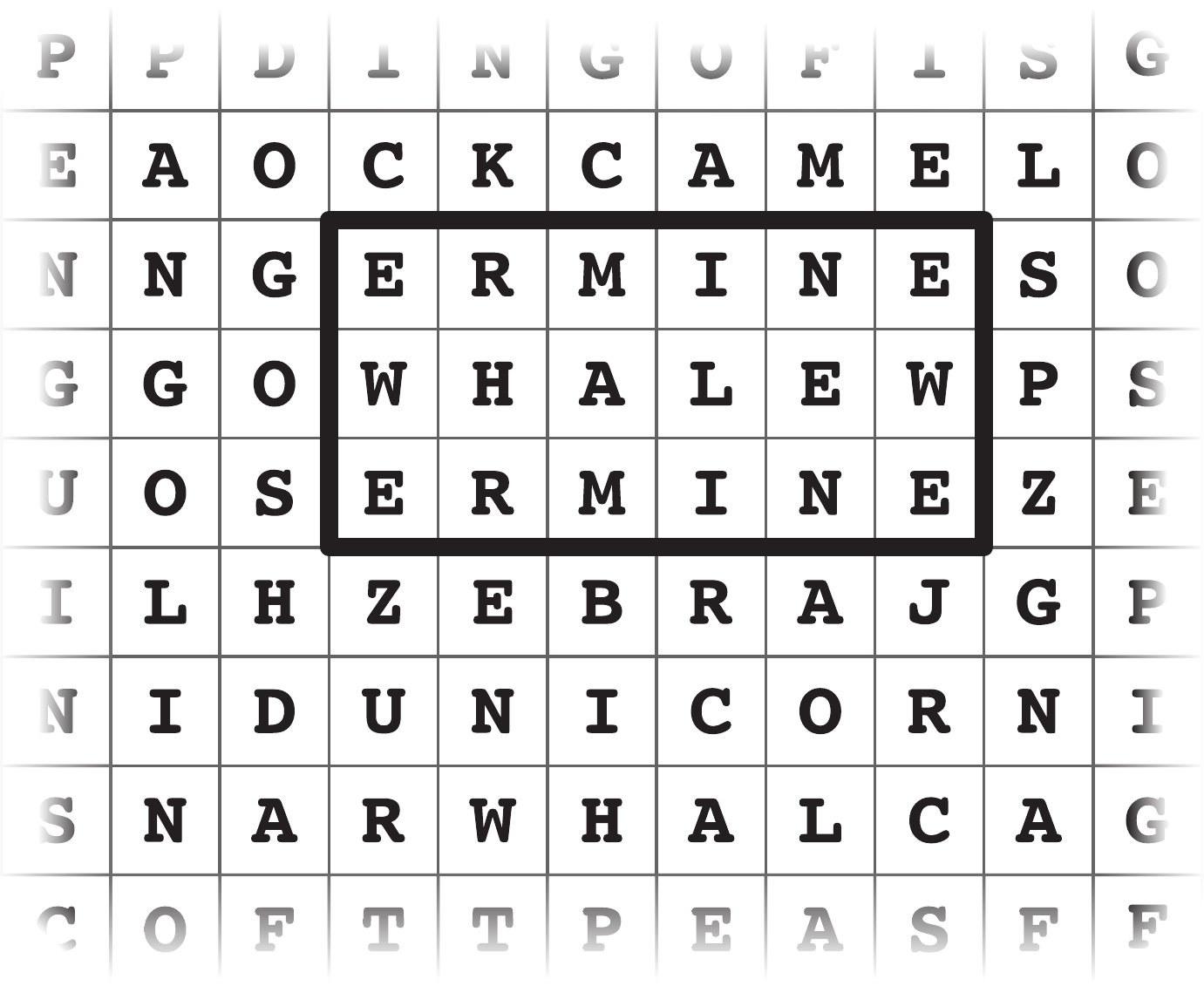}
\end{center}
\end{figure}

Now the following natural question occurs:  is there a $k$-coloring
of the plane lattice containing no picture frames at all?  We might
call such a coloring {\it frameless}.

At first, one might suspect the answer is no, since that is the case
for the one-dimensional analogue of picture frames.   The one-dimensional
analogue is a word of the form $axa$, where $a$ is a single letter, over
a $k$-letter alphabet.
By the pigeonhole principle, every word of length at least $k+1$
contains a repeated letter, and hence a word of the form $axa$.  So
there are no infinite words without $1$-dimensional picture frames.
Furthermore, the four corners of a two-dimensional picture frame
must all contain the same symbol, and we know that for any
finite alphabet, rectangles (in
fact, squares) of identical symbols must occur in any sufficiently
large region of the plane~\cite{Graham:1990}.

In this note we prove the following result:
\begin{theorem}
There exists a frameless $2$-coloring of the infinite plane lattice.
\label{main}
\end{theorem}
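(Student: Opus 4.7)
The plan is to define a coloring that depends only on the sum of coordinates, so that a frame forces periodicity in the two-sided Thue--Morse word ${\bf u}$. Specifically, I would try $f : \Zee \times \Zee \to \{0,1\}$ given by
$$f(i,j) := t(i+j),$$
where $t$ is extended to $\Zee$ as in~(\ref{ext}); thus every row and every column of $f$ is a shifted copy of ${\bf u}$. The hope is that the one-parameter nature of $f$ will let me convert both matching conditions of a frame into statements about a single factor of ${\bf u}$, and then invoke Theorem~1.

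Suppose, toward a contradiction, that $f$ has a picture frame with corners $(a,b)$, $(a+w,b)$, $(a,b+h)$, and $(a+w,b+h)$, for some $w,h \ge 1$. Set $N := a+b$ and consider the factor $s := t(N) t(N+1) \cdots t(N+w+h)$ of ${\bf u}$, of length $w+h+1$. Equality of the top and bottom rows translates, entry by entry, into $t(N+i) = t(N+h+i)$ for $0 \le i \le w$, i.e., $s$ has period $h$. Equality of the left and right columns becomes $t(N+j) = t(N+w+j)$ for $0 \le j \le h$, so $s$ also has period $w$. Without loss of generality $w \le h$, so $|s| = w+h+1 \ge 2w+1$, and the prefix of $s$ of length $2w+1$ has period $w \ge 1$; such a word is necessarily of the form $axaxa$ with $|ax| = w$, i.e., an overlap occurring in ${\bf u}$. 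This contradicts Theorem~1.

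The real obstacle, and the only creative step, is choosing a coloring for which the horizontal and vertical matching conditions of a frame are coupled tightly enough to collide. Letting $f(i,j)$ depend only on $i+j$ achieves this coupling: both conditions become periodicity assertions about the \emph{same} finite factor of ${\bf u}$, whereupon overlap-freeness of the two-sided Thue--Morse word disposes of every frame uniformly, independent of the signs of $a$ and $b$. Everything else is routine bookkeeping about periods and overlap lengths.
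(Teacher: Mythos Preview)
Your proof is correct and is essentially the paper's own argument: define $f(i,j)=t(i+j)$ with $t$ extended to $\Zee$, suppose a frame exists, set $x=a+b$ (the paper's $m+n$), translate the two matching conditions into periodicity of the factor $t(x)\cdots t(x+w+h)$ of ${\bf u}$, and observe that the smaller period forces an overlap, contradicting Theorem~1. The only difference is notational (your $w,h,N$ are the paper's $p,q,x$), and your explicit mention that $s$ has \emph{both} periods $w$ and $h$ is harmless---like the paper, you ultimately use only the smaller one.
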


\section{The coloring and the proof}
\label{two}

We start by proving the result for the quarter plane $\Enn \times \Enn$.

\begin{theorem}
For $i, j \geq 0$ define the coloring $f$ by
$f(i,j) = t(i+j)$, where ${\bf t} = t(0) t(1) t(2)
\cdots$ is the Thue-Morse sequence.  Then $f$ is a frameless 
$2$-coloring of $\Enn \times \Enn$.
\label{main1}
\end{theorem}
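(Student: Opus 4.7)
The plan is to suppose for contradiction that $f$ contains a picture frame and derive an overlap in the Thue-Morse word $\mathbf{t}$, contradicting Thue's theorem that $\mathbf{t}$ is overlap-free. The key observation is that because $f(i,j) = t(i+j)$ depends only on the anti-diagonal coordinate $i+j$, every row and every column of any rectangle in the coloring is a contiguous factor of $\mathbf{t}$, so the two-dimensional frame condition collapses onto a one-dimensional condition on $\mathbf{t}$.

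Concretely, I would let a picture frame have corners $(r_1,c_1)$, $(r_1,c_2)$, $(r_2,c_1)$, $(r_2,c_2)$ with $r_1 < r_2$ and $c_1 < c_2$, and set $h := r_2 - r_1 \geq 1$, $w := c_2 - c_1 \geq 1$, and $s := r_1 + c_1$. Equality of the first and last rows translates via $f(i,j)=t(i+j)$ to the identity $t(s+j) = t(s+h+j)$ for all $0 \leq j \leq w$, while equality of the first and last columns translates to $t(s+i) = t(s+w+i)$ for all $0 \leq i \leq h$. Since $f(i,j) = f(j,i)$, I may assume without loss of generality that $h \leq w$ (otherwise swap the roles of rows and columns).

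Under that assumption, the row relation says precisely that the factor $U := t(s)\, t(s+1) \cdots t(s+w+h)$ of $\mathbf{t}$, of length $w+h+1$, has period $h$ throughout its length; the index range $0 \leq j \leq w$ is exactly what is needed so that $j+h$ reaches the last position $w+h$ of $U$. Because $w+h+1 \geq 2h+1$, the length-$(2h+1)$ prefix of $U$ is a factor of $\mathbf{t}$ of the form $a x a x a$ with $a = t(s)$ and $|ax| = h$ (with $x$ possibly empty when $h=1$) --- that is, an overlap occurring in $\mathbf{t}$, contradicting Thue's theorem.

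The translation step is the main content of the argument; the final step is just the elementary fact that a word of period $p$ and length at least $2p+1$ contains an overlap. I do not anticipate any substantial obstacles, and in particular the column relation turns out to be redundant once the WLOG reduction has been made.
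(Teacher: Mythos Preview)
Your argument is correct and is essentially the same as the paper's direct proof: translate the frame conditions via $f(i,j)=t(i+j)$ into periodicity relations on a factor of $\mathbf{t}$ starting at $s=r_1+c_1$, assume without loss of generality that the smaller side length (your $h$, the paper's $p$) is the period, and observe that the resulting factor has length at least $2h+1$, yielding an overlap in $\mathbf{t}$. Your write-up is somewhat more explicit about why the length suffices, but the content is identical.
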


Here are the first few rows and columns of this coloring, where
$0$ is denoted by a white square and $1$ by a black square.  The coloring
is easily seen to consist of shifted copies of the one-dimensional
Thue-Morse word.

\begin{figure}[H]
\begin{center}
\includegraphics[width=4in]{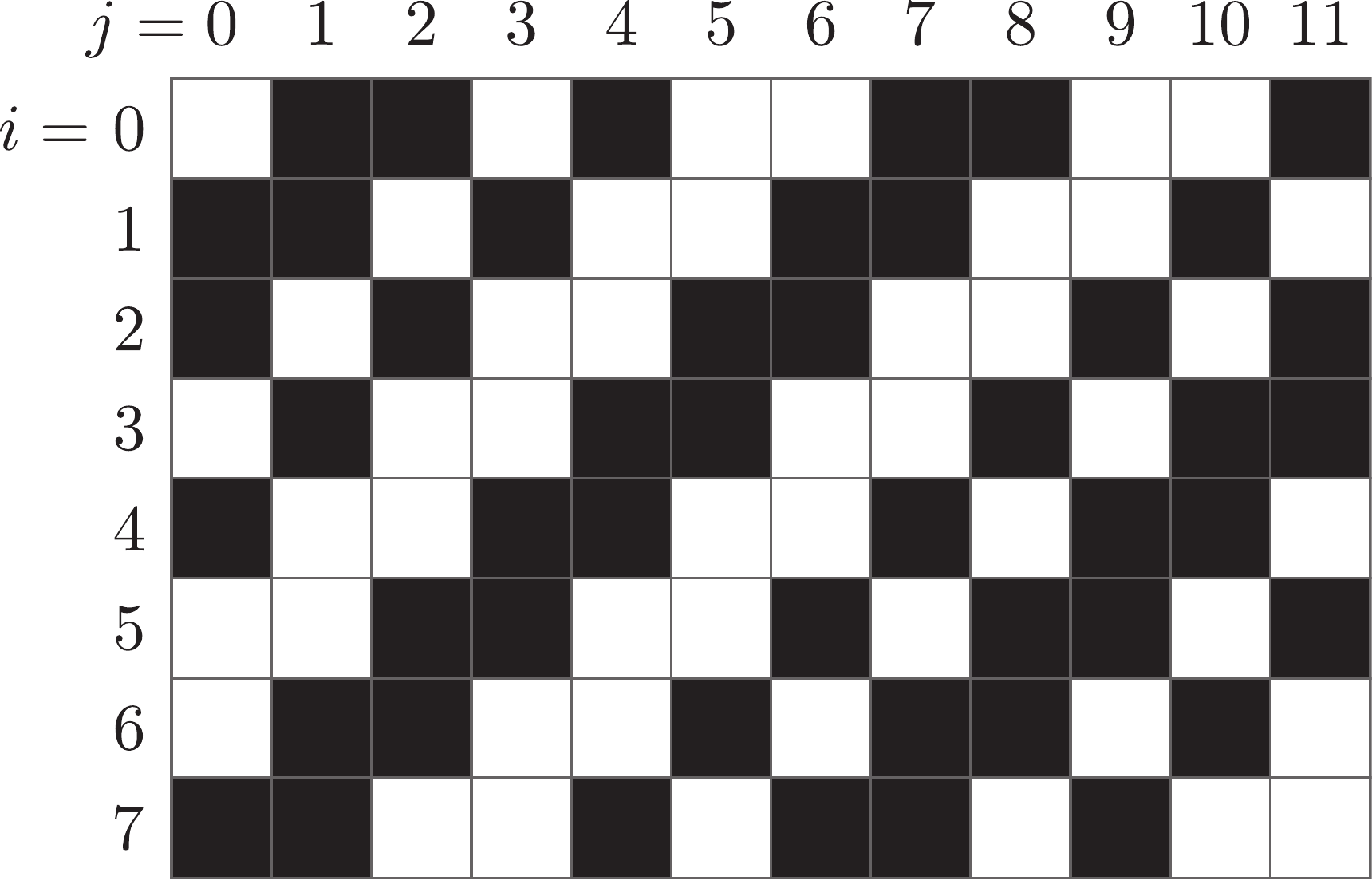}
\end{center}
\label{fig3}
\caption{Portion of a frameless 2-coloring}
\end{figure}

There are at least two ways to prove the result.   One uses
the theorem-proving software {\tt Walnut} again:
\begin{proof}
Let us write a first-order statement for the nonexistence of a
picture frame in $(f(i,j))_{i,j \geq 0}$:
\begin{multline*}
\neg ( \exists m,n,p,q (p \geq 1) \ \wedge \ (q \geq 1) \ \wedge \  \\
(\forall i \ (i \leq q) \implies f(m,n+i)=f(m+p,n+i)) \ \wedge \  \\
(\forall j \ (j \leq p) \implies f(m+j,n)=f(m+j,n+q)) .
\end{multline*}
Using the fact that $f(i,j)= t(i+j)$, this can be translated into
{\tt Walnut} as follows: \\
{\tt eval frameless "\char'176 ( E m,n,p,q (p>=1) \& (q>=1) \& }  \\
\centerline{\tt (Ai (i<=q) => T[m+n+i]=T[m+p+n+i]) 
\& (Aj (j<=p) => T[m+j+n]=T[m+j+n+q]))":} \\
Evaluating this statement in {\tt Walnut} 
gives the response {\tt true}, and requires
less than a second of CPU time on a laptop.
\end{proof}

Alternatively, we can prove Theorem~\ref{main1} directly.
We use the abbreviation
$f(a..b,c..d)$ to denote the rectangular block with rows from $a$ to $b$
and columns from $c$ to $d$. 
\begin{proof}
Suppose there exist $m, n, p, q$ such that $p, q \geq 1$
and $f(m,n..n+q) = f(m+p,n..n+q)$
and $f(m..m+p,n) = f(m..m+p,n+q)$.
Then, since $f(i,j) = t(i+j)$, we have
\begin{align*}
t(x+i) &= t(x+p+i), \ 0 \leq i \leq q; \\
t(x+j) &= t(x+q+j), \ 0 \leq j \leq p,
\end{align*}
where $x = m+n$.  Without loss of generality, assume $p \leq q$.
But then $t(x..x+2p)$ is an overlap, a contradiction.
\end{proof}

There's another way to view this result, using the notion of
{\it two-dimensional morphisms}.   These are morphisms that replace
each letter by a rectangular block of letters
\cite{Shallit&Stolfi:1989}.   If $a$ is
a $k \times \ell$ matrix, and $x$ is an $m \times n$ matrix, then
$h(a)$ is a $km \times \ell n$ matrix.

Now consider the two-dimensional morphism $\gamma$ defined as follows:
\begin{align*}
\gamma(0) &= \left[ \begin{array}{cc} 0 & 1 \\ 1 & 3 \end{array} \right] \\
\gamma(1) = \gamma(2) &= \left[ \begin{array}{cc} 3 & 0 \\ 0 & 3 \end{array} \right] \\
\gamma(3) &= \left[ \begin{array}{cc} 3 & 2 \\ 2 & 0 \end{array} \right] 
\end{align*}
and the coding defined by $\tau(i) = i \bmod 2$.
\begin{theorem}
$(f(i,j))_{i, j \geq 0}$ is given by $\tau(\gamma^\omega(0))$.
\end{theorem}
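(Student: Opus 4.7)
My plan is to interpret each auxiliary letter $\sigma\in\{0,1,2,3\}$ as encoding the local Thue-Morse pair $(t(i+j),t(i+j+1))$ at whatever position $(i,j)$ it sits in $\gamma^n(0)$, with the coding $\tau$ recovering the first coordinate $t(i+j)$. The starting observation, using the Thue-Morse identities $t(2n)=t(n)$ and $t(2n+1)=1-t(n)$, is that the $2\times 2$ block of $f$-values at fine positions $(2i+u,2j+v)$ for $u,v\in\{0,1\}$ equals
\[
\begin{pmatrix} t(i+j) & 1-t(i+j) \\ 1-t(i+j) & t(i+j+1) \end{pmatrix},
\]
so only four distinct blocks occur, parametrized by the pair $(t(i+j),t(i+j+1))\in\{0,1\}^2$. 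This is why a $4$-letter auxiliary alphabet suffices, and it suggests how to match letters to pairs: $\tau(\sigma)$ should give the first coordinate of the pair, and the bottom-right of $\tau(\gamma(\sigma))$ should give the second.

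I would then prove by induction on $n$ that for every $(i,j)$ with $0\le i,j<2^n$, the letter at position $(i,j)$ in $\gamma^n(0)$ is precisely the one encoding $(t(i+j),t(i+j+1))$ under this correspondence. The base case is immediate, since $\gamma^0(0)=0$ sits at $(0,0)$ and $(t(0),t(1))=(0,1)$. For the inductive step, write a fine position $(I,J)$ with $0\le I,J<2^{n+1}$ as $(2i+u,2j+v)$; the substitution rule gives $\gamma^{n+1}(0)_{I,J}=\gamma(\sigma)_{u,v}$, where $\sigma=\gamma^n(0)_{i,j}$ encodes the coarse pair by induction; the Thue-Morse identities express $(t(I+J),t(I+J+1))$ in terms of the coarse pair and $(u,v)$; and direct inspection of $\gamma$ confirms that $\gamma(\sigma)_{u,v}$ is the letter encoding this fine pair. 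Passing to the limit and applying $\tau$ then yields $\tau(\gamma^\omega(0))_{(i,j)}=t(i+j)=f(i,j)$.

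The main obstacle is the case analysis in the inductive step: four choices of $\sigma$ combined with four choices of $(u,v)$ give sixteen checks. A useful structural observation that should tame the bookkeeping is that the two fine-corner slots $(u,v)\in\{(0,0),(1,1)\}$ only ever produce pairs of the form $(a,1-a)$, since $t(2n)$ and $t(2n+1)$ are always complementary; this dictates which letters may appear in the corners of each $\gamma(\sigma)$ and matches the symmetry visible in the four image blocks given in the theorem. Once the letter-to-pair correspondence has been fixed to be consistent with $\tau$ and with the four blocks, the sixteen checks collapse to a short, almost mechanical computation.
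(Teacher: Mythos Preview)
Your approach is essentially the same as the paper's sketch: both key on the observation that the $2\times 2$ block $f(2m{+}a,2n{+}b)$ depends only on the pair $(t(x),t(x{+}1))$ with $x=m+n$, so that the four auxiliary letters record these pairs and $\gamma$ implements the Thue--Morse recursion $t(2n)=t(n)$, $t(2n{+}1)=\overline{t(n)}$. You have simply made explicit the induction and the letter--pair dictionary that the paper leaves implicit; carrying out your sixteen checks will in fact force $\gamma(2)=\left[\begin{smallmatrix}0&3\\3&0\end{smallmatrix}\right]$ (consistent with the displayed $\gamma^2(3)$) rather than the block shown for $\gamma(1)=\gamma(2)$ earlier in the paper.
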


\begin{proof}
(Sketch)   The basic idea is that if we have the values of
$t(x)$ and $t(x+1)$, for $x = m+n$, then the values of
$f(2m+a,2n+b)$ for $a, b \in \{ 0,1\}$ can be computed as follows:
\begin{align*}
f(2m, 2n) &= t(x) \\
f(2m+1,2n) = f(2m, 2n+1) &= \overline{t(x)} \\
f(2m+1,2n+1) &= t(x+1),
\end{align*}
where the bar indicates binary complement ($\overline{0} = 1$ and
$\overline{1} = 0$).
\end{proof}

Let's look at the first few iterates, and their images under $\tau$:
$$ [0], \left[\begin{array}{cc} 0 & 1 \\ 1 & 3 \end{array}\right],
\left[\begin{array}{cccc} 
0 & 1& 3 & 0 \\
1 &3 &0 &3 \\
3 &0 &3 &2  \\
0 &3 &2 &0  
\end{array}\right], \ldots$$
$$ [0], \left[\begin{array}{cc} 0 & 1 \\ 1 & 1 \end{array}\right],
\left[\begin{array}{cccc} 
0 & 1& 1 & 0 \\
1 &1 &0 &1 \\
1 &0 &1 &0  \\
0 &1 &0 &0  
\end{array}\right]
$$
The arrays in the second line form larger and larger portions of the upper left corner of the array in Figure~\ref{fig3}.

\section{Extending our coloring to the whole plane}

%% Jeff suggests that it would be good to have a cleaner
%% way to talk about how this lattice comes into existence
%% via four inclusion limits.

Now that we have a coloring of $\Enn\times \Enn$, we can extend it to
$\Zee \times \Zee$ as follows, and prove this
explicit version of Theorem~\ref{main}.

\medskip

\noindent{\bf Theorem ${\bf 2}'$.}
{\it Using the extended definition of $t$ given in Eq.~\eqref{ext}, define
$f(i,j) = t(i+j)$ for $i, j \geq 0$.
Then $(f(i,j))_{i,j \in \Zee}$ is frameless.}

\begin{proof}
Exactly the same as the proof of Theorem~\ref{main1}, 
using the fact that the two-sided infinite word $\bf u$ is overlap-free.
\end{proof}

A slightly different example can be given using the matrix-valued morphism
$\gamma$ and the coding $\tau$ defined in Section~\ref{two}.
To do so, we start with the $2 \times 2$ array
$M := \gamma(0)$ and consider the center of the array to be a point from
which all four quarter-planes extend outward, as in the figure:
\begin{figure}[H]
\begin{center}
\includegraphics[width=2.5in]{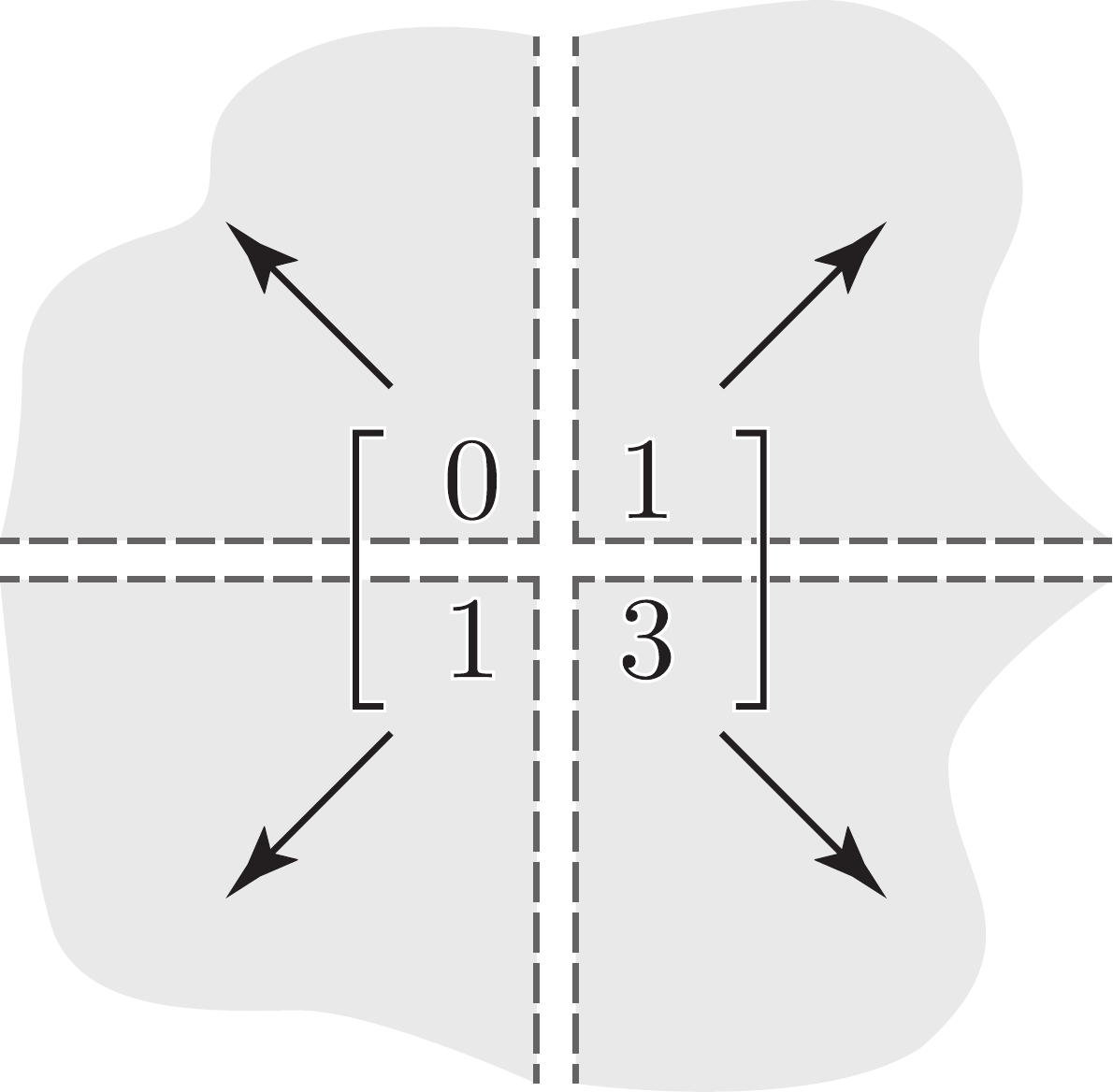}
\end{center}
\label{fig2}
\caption{Iterating a $2$-D morphism to cover the plane}
\end{figure}
We then want to iterate
$\gamma$ on each block.   In order for this procedure to work, we would
need the image of each letter $a$ to match the letter in the 
corresponding corner: 
\begin{itemize}
\item $0$ should appear in the lower right of its
image;
\item $1$ should appear in the upper right and lower left of its
image; and
\item $3$ should appear in the upper left of its image.
\end{itemize}

These properties do not hold for $\gamma$.  But they do
hold for $\gamma^2$ \ !:
\begin{align*}
\gamma^2(0) &= \left[ \begin{array}{cccc} 0 & 1 & 3 & 0 \\ 
	1 & 3 & 0 & 3 \\
	3 & 0 & 3 & 2 \\
	0 & 3 & 2 & 0
	\end{array} \right] &
\gamma^2(1) &= \left[ 
\begin{array}{cccc} 
3 & 2 & 0& 1 \\ 
2 & 0 & 1 & 3 \\
0 & 1 & 3 & 2 \\
1 & 3 & 2 & 0 
\end{array}
\right] &
\gamma^2(3) &= \left[
\begin{array}{cccc}
3 & 2 & 0 & 3 \\ 
2 & 0 & 3 & 0 \\
0 & 3 & 0 & 1 \\
3 & 0 & 1 & 3 \end{array}
\right] \ .
\end{align*}
Applying $\gamma^2$ to $M$ gives
\begin{center}
\begin{tabular}{c|c} 
$\begin{array}{cccc}
0 & 1 & 3 & 0 \\
1 & 3 & 0 & 3 \\
3 & 0 & 3 & 2 \\
0 & 3 & 2 & 0
\end{array}$  & 
$\begin{array}{cccc} 
3 & 2 & 0& 1 \\ 
2 & 0 & 1 & 3 \\
0 & 1 & 3 & 2 \\
1 & 3 & 2 & 0 
\end{array}$ \\
\hline
$\begin{array}{cccc}
3 & 2 & 0 & 1 \\
2 & 0 & 1 & 3 \\
0 & 1 & 3 & 2 \\
1 & 3 & 2 & 0
\end{array}$ &
$\begin{array}{cccc} 
3 & 2 & 0 & 3 \\
2 & 0 & 3 & 0 \\
0 & 3 & 0 & 1 \\
3 & 0 & 1 & 3 
\end{array} 
$
\end{tabular}
\end{center}
Hence, by iterating $\gamma$ infinitely, and
then applying $\tau$, we get a $2$-coloring of the
entire plane with the desired property.

\section{Connection to aperiodic tilings}

A {\it periodic tiling\/} is a tiling of the plane that has two
linearly independent translation symmetries. That is, there are 
two directions in which you can slide the tiling and have the tiles
line up perfectly with translated copies.  A set of shapes is called
{\it aperiodic\/} when they admit tilings of the plane, but none that
are periodic.  Non-periodic tilings are not necessarily interesting
of their own accord, because they might be constructed from shapes
that can trivially be rearranged into periodic tilings. The
interesting case is when the shapes themselves {\it force\/} long-range
aperiodicity, which is why aperiodicity is a property ascribed
to the shapes, known as {\it prototiles}. The most famous aperiodic
tile sets are those
discovered independently by Penrose and Ammann~\cite{GS}.

Consider a tiling whose prototiles are polyominoes (unit squares glued 
together along their edges).  Every member of the set appears in one of 
(at most) eight
rotated and reflected orientations in the tiling.  Assign the integers
$\{1,\ldots, n\}$ to the squares that make up all prototiles; then every
cell in a tiling from those prototiles can be given a label from
$\{1,\ldots, n\}\times\{1,\ldots 8\}$, describing the identity and orientation
of the prototile square covering that cell.  In other words, a tiling 
by polyominoes
is also a coloring of the plane lattice.  The figure shows two annotated
prototiles on the left, and a portion of a tiling/coloring on the right.
\begin{figure}[H]
\begin{center}
\includegraphics[width=4in]{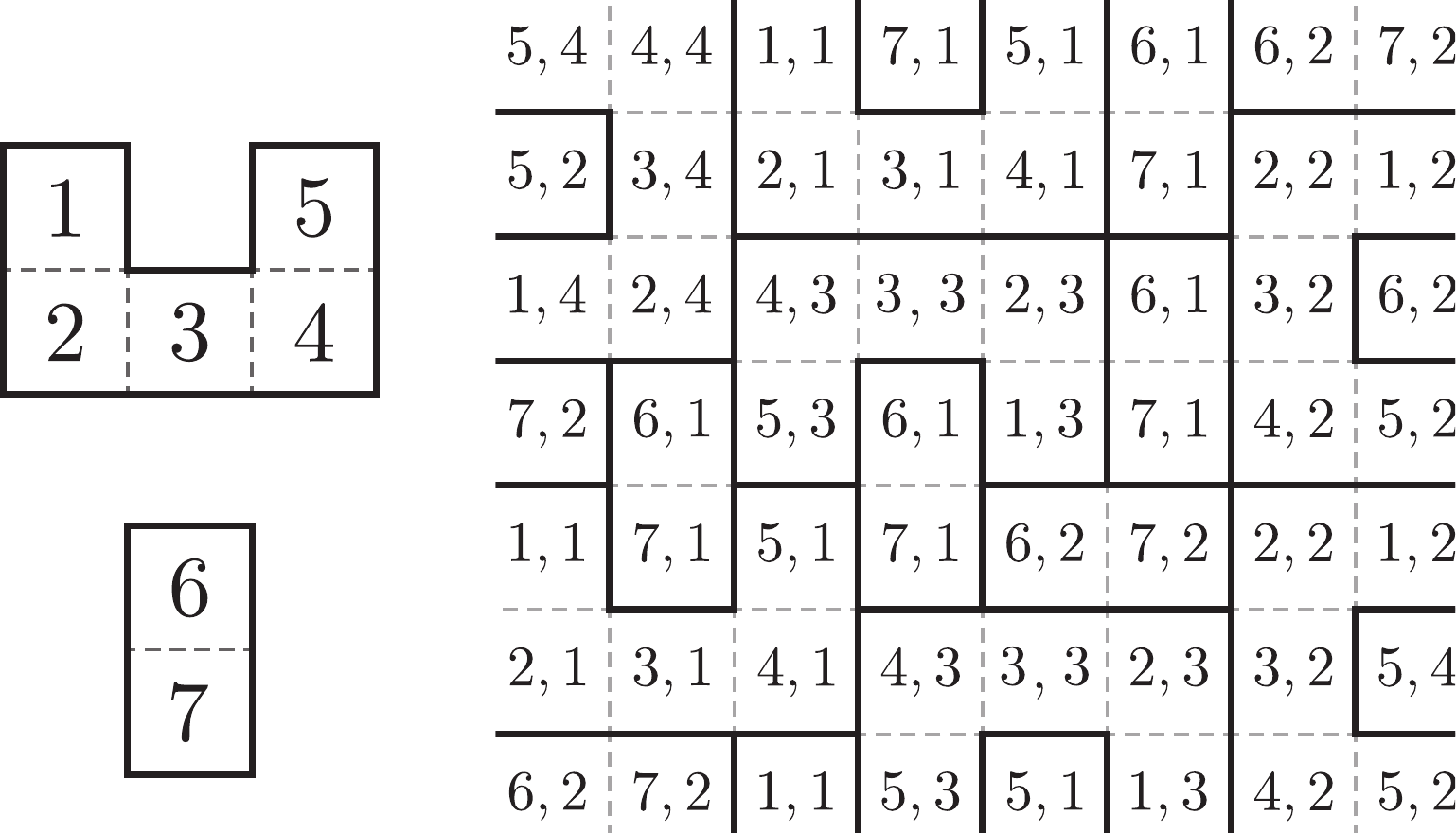}
\end{center}
\end{figure}
We know that there exist aperiodic sets of polyominoes---Winslow offers 
one example adapted from an earlier tiling by Ammann~\cite{Winslow}.
The existence of such prototiles suggests an indirect proof that frameless
colorings of the plane lattice must exist.  Consider a coloring constructed
by applying the labelling method above to a tiling by an aperiodic set of 
polyominoes.  If a frame existed in such a coloring, we would
be able to repeat it in a rectangular arrangement, with adjacent horizontal
and vertical copies overlapping by one column and one row, respectively.
The periodic coloring thus obtained would imply a periodic tiling by
the original polyominoes, contradicting the assumption that they were
aperiodic.

\newcommand{\noopsort}[1]{} \newcommand{\singleletter}[1]{#1}

\end{document}